\newtheorem{theorem}{Theorem}[section]
\newtheorem{proposition}{Proposition}[section]
\theoremstyle{definition}
\newtheorem{definition}{Definition}[section]
\theoremstyle{remark}
\newtheorem{Lemma}{Lemma}[section]
\newtheorem{Remark}{Remark}[section]
\newcommand{\vc}{\mathrm{vec}}
\newcommand{\T}{\mathsf{T}}
\newcommand{\tr}{\mathrm{tr}}
\newcommand{\dif}{\mathrm{d}}
\definecolor{reddish}{HTML}{FBB4AE}
\definecolor{blueish}{HTML}{B3CDE3}
\definecolor{magentish}{HTML}{FF00AA}
\definecolor{greenish}{HTML}{a1d99b}
\begin{document}

\title{Remarks on multivariate Gaussian Process}

\author[1]{Zexun Chen \footnote{Corresponding author: Zexun Chen, Email: z.chen3@exeter.ac.uk
.}}

\author[2]{Jun Fan}
\author[3]{Kuo Wang}

\affil[1]{College of Engineering, Mathematics and Physical Sciences \\ 
University of Exeter, EX4 4QF, UK}
\affil[2]{College of Science, United Arab Emirates University, P.O. Box 15551, UAE}
\affil[3]{College of Mathematics, Physics and Information Engineering, Jiaxing university, 314033, China}

\date{}

\maketitle

\begin{abstract}
Gaussian processes occupy one of the leading places in modern statistics and probability theory due to their importance and a wealth of strong results. 
The common use of Gaussian processes is in connection with problems related to estimation, detection, and many statistical or machine learning models.
With the fast development of Gaussian process applications, 
it is necessary to consolidate the fundamentals of vector-valued stochastic processes, in particular multivariate Gaussian processes, which is the essential theory for many applied problems with multiple correlated responses.
In this paper, we propose a precise definition of multivariate Gaussian processes based on Gaussian measures on vector-valued function spaces, and provide an existence proof.
In addition, several fundamental properties of multivariate Gaussian processes, such as strict stationarity and independence, are introduced. 
We further derive multivariate Brownian motion including It\^o lemma as a special case of a multivariate Gaussian process, and present a brief introduction to multivariate Gaussian process regression as a useful statistical learning method for multi-output prediction problems.

\textbf{Keywords} --- Gaussian measure, Gaussian process, multivariate Gaussian process, multivariate Gaussian distribution, matrix-variate Gaussian distribution, 
pre-Brownian motion
\end{abstract}

\section{Introduction}

In the theory of stochastic processes, some general results on Gaussian processes play a essential role in the construction of Brownian motion, as they both arise naturally from the requirement of independent increments. Furthermore, an understanding of Gaussian processes also gives a better understanding of many fundamentals of stochastic analysis. These factors, together with the simplicity and wealth of important results in the field, have led Gaussian processes to be considered one of the outstanding sub-fields of modern statistics and probability theory.

Nowadays Gaussian processes (GP) are also often considered in the context of supervised machine learning that uses lazy learning and a measure of the similarity between points (the kernel function) to predict the value for an unseen point from training data. 
Rather than inferring a distribution over the parameters of an undetermined parametric function, GP can be used as a non-parametric model in order to infer a distribution over functions directly. 
A GP defines a prior over functions. 
Given some observed function values, it can achieve a posterior over functions. 
GP has been proven to be an effective method for nonlinear problems due to many desirable properties, such as a clear structure with Bayesian interpretation, a simple integrated approach of obtaining and expressing uncertainty in predictions and the capability of capturing a wide variety of data feature by hyper-parameters \cite{cite:RCE,boyle2005dependent}. 
Since \citet{cite:neal} revealed that many Bayesian neural networks converge to Gaussian processes in the limit of an infinite number of hidden units \cite{williams1997computing}, GP has been widely used as an alternative to neural networks in order to solve complicated regression and classification problems in many areas, e.g., Bayesian optimisation \cite{frazier2018tutorial}, time series forecasting \cite{brahim2004gaussian,mackay1997gaussian}, feature selection \cite{savitsky2011variable}, and so on.

With the development of Gaussian processes related to machine learning algorithms, the application of Gaussian processes has faced a conspicuous limitation. 
The classical GP model can be only used to deal with a single output or single response problem because the process itself is defined on $\mathbb{R}$, and as a result the correlation between multiple tasks or responses cannot be taken into consideration \cite{boyle2005dependent,wang2015gaussian}. 
In order to overcome the drawback above, many advanced Gaussian process model was proposed, including dependent Gaussian process \cite{boyle2005dependent}, Gaussian process regression with multiple response variables \cite{wang2015gaussian}, and Gaussian process regression for vector-valued function \cite{alvarez2011kernels}.
The general idea of these methods is to vectorise the multi-response variables and construct a "big" covariance, which describes the correlations between the inputs as well as between the outputs. 
Intrinsically, these approaches depend on the fact that the matrix-variate Gaussian distributions can be reformulated as multivariate Gaussian distributions, 
and these are still conventional Gaussian process regression models since the reformulation merely vectorises the multi-response variables, which are assumed to follow a developed case of GP with a reproduced kernel \cite{gupta1999matrix}.

In another development, \citet{chen2020multivariate} defined multivariate Gaussian processes (MV-GP) and proposed a unified framework to perform multi-output prediction using Gaussian processes.
This framework does not rely on the equivalence between vectorised matrix-variate Gaussian distribution and multivariate Gaussian distribution, and it can be easily used to produce a general elliptical process model, for example, multivariate Student-$t$ process (MV-TP) for multi-output prediction.
Both MV-GPR and MV-TPR have closed-form expressions for the marginal likelihoods and predictive distributions under this unified framework and thus can adopt the same optimization approaches as used in the conventional GP regression. 
Although \citet{chen2020multivariate} showed the usefulness of the proposed methods via data-driven examples, some theoretical issues of multivariate Gaussian processes are still not clear, e.g., the existence of MV-GP.

When it comes to the theoretical fundamentals of stochastic processes, a close look at measure theory is indispensable. 
Briefly speaking, (multivariate) Gaussian distributions are Gaussian measures on $\mathbb{R}^n$, and Gaussian processes are Gaussian measures on the function space $(\mathbb{R}_T, \mathcal{F})$ (for details refer to Definition~\ref{def:gm-on-Rn}, Definition~\ref{def:gp_measure}, and Theorem \ref{thm:relationshipGMandGP} below).
Based on the relationship between Gaussian measures and Gaussian processes, we properly defined multivariate Gaussian processes by extending Gaussian measures on function spaces to vector-valued function spaces.

The paper is organised as follows. 
Section~\ref{section:basic} introduces some preliminaries of Gaussian processes, including some useful properties and the proof of existence. 
Section~\ref{section:mv-gp} presents some theoretical definitions of multivariate Gaussian process with the proof of existence. 
The examples and application of multivariate Gaussian processes which show their usefulness is presented in Section~\ref{section:example} and Section~\ref{section:application}. 
Conclusions and a discussion are given in Section~\ref{section:conclusion}.

\section{Preliminary of Gaussian process}\label{section:basic}
\subsection{Stochastic process}
A stochastic (or random) process is defined by a collection of random variables defined on a common probability space $(\Omega ,{\mathcal{F}},\mathcal{P})$, where $\Omega$ is a sample space, $\mathcal {F}$ is a $\sigma$-algebra and $\mathcal{P}$ is a probability measure; and the random variables, indexed by some set $T$, all take values in the same mathematical space $S$, which must be measurable with respect to some $\sigma$ -algebra 
$\Sigma$ \cite{lamperti2012stochastic}.
In other words, for a given probability space $(\Omega ,{\mathcal{F}},\mathcal{P})$ and a measurable space $(S,\Sigma )$, a stochastic process is a collection of $S$-valued random variables, which can be written as:
$$
\{f(t):t\in T\}.
$$

A stochastic process can be interpreted or defined as a $S_T$-valued random variable, where $S_T$ is the space of all the possible $S$-valued functions of $t\in T$ that map from the set $T$ into the space $S$ \cite{kallenberg2006foundations}. 
The set $T$ is usually one of these: 
$$
\mathbb{R}, \mathbb{R}^{n}, \mathbb{R}^{+} = [0, +\infty), \mathbb{Z} = \{\cdots, -1, 0, 1, \cdots \}, \mathbb{Z}^{+} = (0, 1, \cdots).
$$
If $T = \mathbb{Z}$ or $ \mathbb{Z}^{+}$, we always call it \emph{random sequence}. If $T = \mathbf{R}^n$ with $n >1$, the process is often considered as a \emph{random field}.
The set $S$ is called \emph{state space} and usually formulated as one of these:
$$
\mathbb{R}, \{0,1\}, \mathbb{Z}^{+}, \mathbb{D} = \{A, B, C, \cdots \}.
$$
Indeed, the random variable of the process is not required to be in the form of one of these above sets, but it must have the same measurable $S$. For example, if $S = \mathbb{R}^{d}$ or $\mathbb{D}^d$ with $d>1$, it is called \emph{vector-valued} process.


\subsection{Gaussian measure and distribution}

\begin{definition}[Gaussian measure on $\mathbb{R}$]
Let $\mathcal{B}(\mathbb{R})$ denote the completion of the Borel $\sigma$-algebra on $\mathbb{R}$. Let $\lambda: \mathcal{B}(\mathbb{R}) \mapsto [0, + \infty]$ denote the usual Lebesgue measure. Then the Borel probability measure $\gamma: \mathcal{B}(\mathbb{R}) \mapsto [0, 1] $ is Gaussian with mean $\mu \in \mathbb{R}$ and variance $\sigma^2 >0$,
$$
\gamma(A) = \int_{A} \frac{1}{\sqrt{2\pi \sigma^2}}\exp \left(-\frac{(x - \mu)^2}{2\sigma^2}  \right) \dif \lambda(x)
$$
for any measurable set $A \in \mathcal{B}(\mathbb{R})$.
\end{definition} 
A random variable $X$ on a probability space $(\Omega, \mathcal{F}, \mathcal{P})$ is Gaussian with mean $\mu$ and variance $\sigma^2$ if its distribution measure is Gaussian, i.e.
$$
\mathcal{P}(X \in A) = \gamma(A)
$$
As we know from the view of random variable, we have 
\begin{definition}
  An $n$-dimensional random vector $\bm{X} = (X_1, \cdots, X_n)$ is Gaussian if and only if $\langle \bm{a}, \bm{X} \rangle :  = \bm{a}^{\T} \bm{X} = \sum a_i X_i$ is a Gaussian random variable for all $\bm{a} = (a_1, \cdots, a_n) \in \mathbb{R}^n$.
\end{definition}

In terms of measure,
\begin{definition}[Gaussian measure on $\mathbb{R}^n$]
\label{def:gm-on-Rn}
 Let $\gamma$ be a Borel probability measure on $\mathbb{R}^n$. For each $\bm{a} \in \mathbb{R}^n$, denote a random variable $Y(\bm{x} \in \mathbb{R}^n)$ as a mapping $\bm{x} \mapsto \langle \bm{a}, \bm{x} \rangle \in \mathbb{R}$ on the probability space $(\mathbb{R}^n, \mathcal{B}(\mathbb{R}^n), \gamma)$. The Borel probability measure $\gamma$ is a Gaussian measure on $\mathbb{R}^n$ if and only if the random variable $Y$ is Gaussian for each $\bm{a}$.
\end{definition}

A matrix Gaussian distribution in statistics is a probability distribution by generalizing the multivariate normal distribution to matrix-valued random variables, which can be defined by multivariate Gaussian distribution.
\begin{definition}[Matrix Gaussian distribution]\label{def:matrix-gm}
The random matrix is said to be Gaussian \cite{gupta1999matrix}:
$$
\bm{X} \sim \mathcal{MN}_{n, d}(M, U, V),
$$
if and only if 
$$
\vc(\bm{X}) \sim \mathcal{N}_{nd}(\vc(M), V \otimes U),
$$
where $\otimes$ denotes the Kronecker products and $\vc(\bm{X})$ denotes the vectorisation of $\bm{X}$.
\end{definition}

\begin{theorem}[Marginalization and conditional distribution \cite{chen2020multivariate, gupta1999matrix}]\label{thm:MarginCondition-G}
  Let 
  $$X \sim \mathcal{MN}_{n,d}(M, \Sigma, \Lambda)$$
  and partition $X, M, \Sigma$ and $\Lambda$ as
\begin{equation*}
  X =
  \begin{adjustbox}{raise=-1ex} $\displaystyle
    \begin{blockarray}{[c]c}
      X_{1r} & n_1\\
      X_{2r} & n_2
    \end{blockarray} $
  \end{adjustbox}
  =
    \begin{adjustbox}{raise=-1ex} $\displaystyle
    \begin{blockarray}{cc}
          &            \\
      \begin{block}{[cc]}
         X_{1c} & X_{2c} \\
      \end{block}
      d_1 & d_2
    \end{blockarray}$
  \end{adjustbox}
  ,\quad
  M =
  \begin{adjustbox}{raise=-1ex} $\displaystyle
    \begin{blockarray}{[c]c}
      M_{1r} & n_1\\
      M_{2r} & n_2
    \end{blockarray} $
  \end{adjustbox}
  =
    \begin{adjustbox}{raise=-1ex} $\displaystyle
    \begin{blockarray}{cc}
          &        \\
      \begin{block}{[cc]}
         M_{1c} & M_{2c} \\
      \end{block}
      d_1 & d_2
    \end{blockarray} $
  \end{adjustbox}
\end{equation*}
\begin{equation*}
  \Sigma =
  \begin{adjustbox}{raise=-2.5ex} $\displaystyle
    \begin{blockarray}{ccc}
      \begin{block}{[cc]c}
        \Sigma_{11} & \Sigma_{12} &n_1\\
        \Sigma_{21} & \Sigma_{22} & n_2\\
      \end{block}
      n_1 & n_2 &
    \end{blockarray} $
  \end{adjustbox}
  \quad \text{and} \quad
  \Lambda =
  \begin{adjustbox}{raise=-2.5ex} $\displaystyle
    \begin{blockarray}{ccc}
      \begin{block}{[cc]c}
        \Lambda_{11} & \Lambda_{12} & d_1\\
        \Lambda_{21} & \Lambda_{22} & d_2\\
      \end{block}
      d_1 & d_2 &
    \end{blockarray} $
  \end{adjustbox},
\end{equation*}
where $n_1,n_2, d_1,d_2$ is the column or row length of the corresponding vector or matrix. Then,
\begin{enumerate}[leftmargin=*,labelsep=3mm]
   \item $X_{1r} \sim \mathcal{MN}_{n_1,d}\left(M_{1r},\Sigma_{11},\Lambda \right)$,
   $$
   X_{2r}|X_{1r} \sim \mathcal{MN}_{n_2,d}\left(M_{2r} + \Sigma_{21}\Sigma_{11}^{-1}(X_{1r}-M_{1r}),\Sigma_{22\cdot1},\Lambda \right);
   $$
   \item $X_{1c} \sim \mathcal{MN}_{n,d_1}\left(M_{1c},\Sigma,\Lambda_{11}\right)$,
   $$
       X_{2c}|X_{1c} \sim \mathcal{MN}_{n,d_2}\left(M_{2c} + (X_{1c}-M_{1c})\Lambda_{11}^{-1}\Lambda_{12},\Sigma,\Lambda_{22\cdot1} \right);
   $$
\end{enumerate}
where $\Sigma_{22\cdot1} $and $\Lambda_{22\cdot1}$ are the Schur complement \cite{zhang2006schur} of $\Sigma_{11}$ and $\Lambda_{11}$, respectively,
$$
\Sigma_{22\cdot1} = \Sigma_{22} - \Sigma_{21}\Sigma_{11}^{-1}\Sigma_{12} , \quad \Lambda_{22\cdot1} = \Lambda_{22} - \Lambda_{21}\Lambda_{11}^{-1}\Lambda_{12}.
$$
\end{theorem}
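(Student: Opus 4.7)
The plan is to reduce the entire statement to the known marginalization and conditioning rules for multivariate Gaussian vectors, using Definition~\ref{def:matrix-gm} to pass between the matrix-valued and vectorized pictures, and then to recognize the resulting multivariate Gaussian parameters as Kronecker products that correspond to matrix Gaussian laws.

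I would dispatch the column partition (item 2) first, since it is cleaner. Because $\vc(X)$ stacks columns, the first $n d_1$ coordinates of $\vc(X)$ are precisely $\vc(X_{1c})$, and the corresponding leading block of the covariance $\Lambda \otimes \Sigma$ is $\Lambda_{11} \otimes \Sigma$. The standard marginal rule then gives $\vc(X_{1c}) \sim \mathcal{N}_{n d_1}(\vc(M_{1c}), \Lambda_{11} \otimes \Sigma)$, which by Definition~\ref{def:matrix-gm} reads $X_{1c} \sim \mathcal{MN}_{n,d_1}(M_{1c}, \Sigma, \Lambda_{11})$. For the conditional, I apply the Gaussian conditioning formula to the partitioned vector $(\vc(X_{1c}), \vc(X_{2c}))$ and simplify using the Kronecker identities $(A \otimes B)^{-1} = A^{-1} \otimes B^{-1}$ and $(A \otimes B)(C \otimes D) = (AC) \otimes (BD)$. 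The shift becomes $(\Lambda_{21}\Lambda_{11}^{-1} \otimes I_n)(\vc(X_{1c}) - \vc(M_{1c}))$; by $\vc(A Y B) = (B^{\T} \otimes A) \vc(Y)$ together with the symmetry $\Lambda_{21}^{\T} = \Lambda_{12}$, this rewrites as $\vc\bigl((X_{1c} - M_{1c})\Lambda_{11}^{-1}\Lambda_{12}\bigr)$. The conditional covariance collapses to $\Lambda_{22\cdot 1} \otimes \Sigma$, yielding the stated matrix Gaussian law.

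The row partition (item 1) can be obtained by transposing: a short calculation with the commutation matrix $K_{n,d}$, using $K_{n,d}(\Lambda \otimes \Sigma)K_{n,d}^{\T} = \Sigma \otimes \Lambda$, shows that $X \sim \mathcal{MN}_{n,d}(M, \Sigma, \Lambda)$ is equivalent to $X^{\T} \sim \mathcal{MN}_{d,n}(M^{\T}, \Lambda, \Sigma)$. Row blocks of $X$ then correspond to column blocks of $X^{\T}$, so applying item 2 to $X^{\T}$ and transposing back recovers both the marginal $X_{1r} \sim \mathcal{MN}_{n_1,d}(M_{1r}, \Sigma_{11}, \Lambda)$ and the stated conditional for $X_{2r} \mid X_{1r}$. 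Alternatively one could introduce a permutation matrix that gathers the row-$1r$ coordinates of $\vc(X)$ to the top and work directly; the Kronecker identities required are the same.

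The main obstacle is really just the bookkeeping inside the conditional step, namely verifying that Schur complements of Kronecker-structured blocks remain Kronecker products. Once one records the identity $(\Lambda_{22} \otimes \Sigma) - (\Lambda_{21} \otimes \Sigma)(\Lambda_{11} \otimes \Sigma)^{-1}(\Lambda_{12} \otimes \Sigma) = \Lambda_{22\cdot 1} \otimes \Sigma$, and its analogue for the row case, the remainder of the proof is a direct identification with Definition~\ref{def:matrix-gm}.
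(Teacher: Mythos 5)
The paper itself offers no proof of this theorem: it is imported verbatim from the cited references (Gupta--Nagar and Chen--Wang), so there is nothing internal to compare against. Your argument is the canonical one and it is correct. Since Definition~\ref{def:matrix-gm} \emph{defines} $\mathcal{MN}_{n,d}(M,\Sigma,\Lambda)$ through $\vc(X)\sim\mathcal{N}_{nd}(\vc(M),\Lambda\otimes\Sigma)$, reducing everything to Gaussian marginalization/conditioning of the vectorized variable is exactly the right move; the key identities you isolate --- the block structure of $\Lambda\otimes\Sigma$ under a column partition, $(\Lambda_{21}\otimes\Sigma)(\Lambda_{11}\otimes\Sigma)^{-1}=\Lambda_{21}\Lambda_{11}^{-1}\otimes I_n$, the rewriting of the shift via $\vc(AYB)=(B^{\T}\otimes A)\vc(Y)$ with $\Lambda_{21}^{\T}=\Lambda_{12}$, and the Kronecker--Schur identity $(\Lambda_{22}\otimes\Sigma)-(\Lambda_{21}\otimes\Sigma)(\Lambda_{11}\otimes\Sigma)^{-1}(\Lambda_{12}\otimes\Sigma)=\Lambda_{22\cdot1}\otimes\Sigma$ --- are all correct and are precisely what the references use. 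Handling the row partition by the transpose equivalence $X\sim\mathcal{MN}_{n,d}(M,\Sigma,\Lambda)\iff X^{\T}\sim\mathcal{MN}_{d,n}(M^{\T},\Lambda,\Sigma)$, justified by the commutation-matrix relation $K_{n,d}(\Lambda\otimes\Sigma)K_{n,d}^{\T}=\Sigma\otimes\Lambda$, is a clean economy that avoids redoing the permutation bookkeeping.

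Two small points worth recording if you write this out in full. First, the conditioning step tacitly uses $(\Lambda_{11}\otimes\Sigma)^{-1}=\Lambda_{11}^{-1}\otimes\Sigma^{-1}$, so you are assuming $\Sigma$, $\Sigma_{11}$ and $\Lambda_{11}$ are nonsingular (positive definite rather than merely positive semi-definite); this is implicit in the theorem statement, which already displays $\Sigma_{11}^{-1}$ and $\Lambda_{11}^{-1}$, but it should be stated, since with singular blocks the pseudo-inverse manipulation does not collapse to $\Lambda_{21}\Lambda_{11}^{-1}\otimes I_n$. Second, when transposing the conditional in item 1 you need $\Sigma_{12}^{\T}=\Sigma_{21}$, i.e.\ symmetry of $\Sigma$, to land on the stated form $M_{2r}+\Sigma_{21}\Sigma_{11}^{-1}(X_{1r}-M_{1r})$; again harmless, but it belongs in the writeup.
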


\subsection{Gaussian process}

Consider the space $\mathbb{R}_T$ of all $\mathbb{R}$-valued functions on $T$. A subset of the form $\{f: f(t_i) \in A_i, 1 \leq i \leq n \}$ for some $n \geq 1, t_i \in T$ and some Borel sets $A_{i} \subseteq \mathbb{R}$ is called a \emph{cylinder} set. 
Let $\mathcal{F}$ be the $\sigma$-algebra generated by all cylinder sets.

Also, we may consider the product topology on $\mathbb{R}_T$, which defined as the smallest topology that makes the projection maps $\Pi_{t_{1}, \ldots, t_{n}}(f)=\left[f\left(t_{1}\right), \ldots, f\left(t_{n}\right)\right]$ from $\mathbb{R}_T$ to
$\mathbb{R}^n$ measurable, and define $\mathcal{F}$ as the Borel $\sigma$-algebra of this topology. We can obtain: 

\begin{definition}[Gaussian measure on $(\mathbb{R}_T, \mathcal{F})$]\label{def:gp_measure}
A measure $\gamma$ on $(\mathbb{R}_T, \mathcal{F})$ is called as a Gaussian measure if for any $n \geq 1$ and $t_1, \cdots, t_n \in T$, the push-forward measure $\gamma \circ \Pi_{t_1, \cdots, t_n}^{-1}$ on $\mathbb{R}^n$ is a Gaussian measure.
\end{definition}

\begin{theorem}[Relationship between Gaussian process and Gaussian measure]\label{thm:relationshipGMandGP}
If $X=(X_t)_{t \in T}$ is a Gaussian process, then the push-forward measure $\gamma = \mathcal{P} \circ X^{-1}$ with $X: \Omega \mapsto \mathbb{R}_{T}$ is Gaussian on $\mathbb{R}_{T}$, namely, $\gamma$ is a Gaussian measure on $(\mathbb{R}_T, \mathcal{F})$. Conversely, if $\gamma$ is a Gaussian measure on $(\mathbb{R}_T, \mathcal{F})$, then on the probability space $(\mathbb{R}_T, \mathcal{F}, \gamma)$, the co-ordinate random variable $\Pi = (\Pi_t)_{t \in T}$ is from a Gaussian process.
\end{theorem}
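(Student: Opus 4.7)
The plan is to reduce both directions to the functorial identity $\mu\circ(g\circ h)^{-1}=(\mu\circ h^{-1})\circ g^{-1}$, combined with the observation that $\Pi_{t_1,\dots,t_n}\circ X$ is literally the finite-dimensional marginal $(X_{t_1},\dots,X_{t_n})$ of the process. Once this is set up, the theorem becomes a matter of unwinding Definition~\ref{def:gm-on-Rn}, Definition~\ref{def:gp_measure}, and the standard definition of a Gaussian process as a stochastic process whose finite-dimensional marginals are all multivariate Gaussian.

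For the forward direction I would first verify that $X:\Omega\to\mathbb{R}_T$ is measurable with respect to $\mathcal{F}$. Since $\mathcal{F}$ is generated by cylinders of the form $\Pi_{t_1,\dots,t_n}^{-1}(A)$ with $A\in\mathcal{B}(\mathbb{R}^n)$, it suffices that the preimage $(\Pi_{t_1,\dots,t_n}\circ X)^{-1}(A)$ is measurable in $\Omega$, and this holds because each $X_{t_i}$ is a random variable. The push-forward $\gamma=\mathcal{P}\circ X^{-1}$ is then well defined, and
$$
\gamma\circ\Pi_{t_1,\dots,t_n}^{-1}=\mathcal{P}\circ(\Pi_{t_1,\dots,t_n}\circ X)^{-1}
$$
is exactly the law of the random vector $(X_{t_1},\dots,X_{t_n})$. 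By hypothesis this vector is multivariate Gaussian, so its distribution is a Gaussian measure on $\mathbb{R}^n$ by Definition~\ref{def:gm-on-Rn}, which is precisely the criterion in Definition~\ref{def:gp_measure}.

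For the converse, working on the probability space $(\mathbb{R}_T,\mathcal{F},\gamma)$, each coordinate map $\Pi_t(f)=f(t)$ is $\mathcal{F}$-measurable by construction of $\mathcal{F}$, so $\Pi=(\Pi_t)_{t\in T}$ is a genuine stochastic process. For any $t_1,\dots,t_n\in T$ the joint law of $(\Pi_{t_1},\dots,\Pi_{t_n})$ under $\gamma$ equals $\gamma\circ\Pi_{t_1,\dots,t_n}^{-1}$, which is Gaussian on $\mathbb{R}^n$ by hypothesis. Hence every finite-dimensional marginal of $\Pi$ is multivariate Gaussian, i.e.\ $\Pi$ is a Gaussian process.

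The only point that calls for care, and which I expect to be the main bookkeeping obstacle, is pinning down the $\sigma$-algebra $\mathcal{F}$ on $\mathbb{R}_T$. The preamble offers two descriptions, the $\sigma$-algebra generated by cylinder sets and the Borel $\sigma$-algebra of the product topology. I would fix the cylinder $\sigma$-algebra throughout, because it is tailored to make the projections $\Pi_{t_1,\dots,t_n}$ measurable and therefore yields a clean one-to-one correspondence between measurability of $X:\Omega\to\mathbb{R}_T$ and joint measurability of all coordinates $X_t:\Omega\to\mathbb{R}$. With this convention fixed at the outset, every other step of the argument collapses to routine change-of-variables bookkeeping.
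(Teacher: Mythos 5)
Your proof is correct, and it is worth noting that the paper itself does not supply an argument for this theorem at all: it simply defers to the citation \cite{rajput1972gaussian}. What you have written is the standard direct verification, and it works. The forward direction rests on two observations you make explicitly: that $X:\Omega\to\mathbb{R}_T$ is measurable for the cylinder $\sigma$-algebra because measurability need only be checked on the generating sets $\Pi_{t_1,\dots,t_n}^{-1}(A)$, and that the functorial identity $\gamma\circ\Pi_{t_1,\dots,t_n}^{-1}=\mathcal{P}\circ(\Pi_{t_1,\dots,t_n}\circ X)^{-1}$ identifies the push-forward of the finite-dimensional projection with the law of $(X_{t_1},\dots,X_{t_n})$, which is Gaussian by hypothesis; the converse is the same identification read in the other direction, using that each $\Pi_t$ is a random variable on $(\mathbb{R}_T,\mathcal{F},\gamma)$ by construction. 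Your closing caveat about the $\sigma$-algebra is also well placed and is in fact a point the paper glosses over: for uncountable $T$ the cylinder (Baire) $\sigma$-algebra is in general strictly smaller than the Borel $\sigma$-algebra of the product topology, so the paper's claim that the two descriptions of $\mathcal{F}$ coincide is not accurate as stated; fixing the cylinder $\sigma$-algebra, as you do, is exactly what makes the measurability of $X$ and the change-of-variables bookkeeping go through cleanly, and it is the convention under which the theorem (and its use in Theorem~\ref{thm:gp-existence}) is actually applied.
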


The proof of the relationship between Gaussian process and Gaussian measure can be found in \cite{rajput1972gaussian}.


\begin{theorem}[Existence of Gaussian process]\label{thm:gp-existence}
For any index set $T$, any mean function $\mu: T \mapsto \mathbb{R}$ and any covariance function (function has covariance form), $k: T \times T \mapsto \mathbb{R}$, there exists a probability space $(\Omega, \mathcal{F}, \mathcal{P})$ and a Gaussian process $\mathcal{GP}(\mu, k)$ on this space, whose mean function is $\mu$ and covariance function is $k$. It is denoted as $X \sim \mathcal{GP}(\mu, k)$.
\end{theorem}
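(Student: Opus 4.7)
The plan is to build the process via its finite-dimensional distributions and invoke Kolmogorov's extension theorem, then translate the resulting measure back to a process using Theorem~\ref{thm:relationshipGMandGP}. Concretely, for every finite tuple $(t_1,\dots,t_n) \in T^n$, I would define the candidate finite-dimensional law $\gamma_{t_1,\dots,t_n}$ on $(\mathbb{R}^n, \mathcal{B}(\mathbb{R}^n))$ to be the multivariate Gaussian measure with mean vector $m = (\mu(t_1),\dots,\mu(t_n))^\T$ and covariance matrix $K = [k(t_i,t_j)]_{i,j=1}^n$. That $K$ is symmetric and positive semi-definite is exactly what is being assumed by the phrase ``covariance function'', so each $\gamma_{t_1,\dots,t_n}$ is a bona fide Gaussian measure on $\mathbb{R}^n$ in the sense of Definition~\ref{def:gm-on-Rn} (possibly degenerate, in which case it is supported on an affine subspace).

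Next, I would verify Kolmogorov's two consistency conditions for this family. For permutation consistency, if $\sigma$ is a permutation of $\{1,\dots,n\}$ then the push-forward of $\gamma_{t_1,\dots,t_n}$ under the coordinate permutation agrees with $\gamma_{t_{\sigma(1)},\dots,t_{\sigma(n)}}$: indeed, both are Gaussian, and their mean vectors and covariance matrices are related by the same permutation. For marginal consistency, projecting $\gamma_{t_1,\dots,t_n,t_{n+1}}$ onto the first $n$ coordinates yields a Gaussian with mean $(\mu(t_1),\dots,\mu(t_n))$ and covariance $[k(t_i,t_j)]_{i,j=1}^n$, matching $\gamma_{t_1,\dots,t_n}$; this uses the elementary fact that marginals of multivariate Gaussians are Gaussian with the obviously restricted parameters, a special case of Theorem~\ref{thm:MarginCondition-G}.

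Kolmogorov's extension theorem then furnishes a unique probability measure $\gamma$ on the cylinder $\sigma$-algebra $(\mathbb{R}_T,\mathcal{F})$ whose projections $\gamma \circ \Pi_{t_1,\dots,t_n}^{-1}$ coincide with the prescribed $\gamma_{t_1,\dots,t_n}$. Because each such projection is Gaussian on $\mathbb{R}^n$, Definition~\ref{def:gp_measure} tells us that $\gamma$ is a Gaussian measure on $(\mathbb{R}_T,\mathcal{F})$. Taking $(\Omega,\mathcal{F},\mathcal{P}) = (\mathbb{R}_T,\mathcal{F},\gamma)$ and letting $X_t = \Pi_t$ be the coordinate maps, the converse half of Theorem~\ref{thm:relationshipGMandGP} guarantees that $X = (X_t)_{t\in T}$ is a Gaussian process; a direct computation of $\mathbb{E}[X_t]$ and $\mathbb{E}[(X_s-\mu(s))(X_t-\mu(t))]$ under the two-dimensional marginal $\gamma_{s,t}$ confirms that its mean function is $\mu$ and its covariance function is $k$.

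The main obstacle is not the existence argument per se but ensuring the hypothesis on $k$ is used correctly: one must know a priori that $[k(t_i,t_j)]_{i,j=1}^n$ is positive semi-definite for every finite selection, since otherwise no Gaussian measure on $\mathbb{R}^n$ with that covariance exists and the whole construction collapses at the first step. A secondary technical point is that Kolmogorov's theorem requires the index set to be arbitrary and the state space $\mathbb{R}$ to be Polish (which it is), so no additional cardinality restriction on $T$ is needed; the delicate verification is only the two consistency conditions, and both follow transparently from standard properties of multivariate Gaussians.
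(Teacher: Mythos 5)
Your proposal is correct and follows essentially the same route as the paper: construct the consistent family of finite-dimensional Gaussian measures from $\mu$ and $k$, invoke the Daniell--Kolmogorov extension theorem, and pass back to a process via Theorem~\ref{thm:relationshipGMandGP}. Your write-up is in fact somewhat more careful than the paper's (explicit permutation consistency and the role of positive semi-definiteness of $[k(t_i,t_j)]$), but the underlying argument is the same.
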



\begin{proof}
Thanks to Theorem~\ref{thm:relationshipGMandGP}, we just need to prove the existence of Gaussian measure with the specific mean vector generated by mean function and specific covariance matrix generated by covariance function. 
Given $n > 1$, for every $t_1, \cdots, t_n \in T$, a Gaussian measure $\gamma_{t_1, \cdots, t_n}$ on $\mathbb{R}^n$ satisfies the assumptions of \emph{Daniell-Kolmogorov theorem} because the projection of Gaussian distribution on $\mathbb{R}^n$ with $n$-dimensional vector $\left[\mu(t_1), \cdots, \mu(t_n)\right] \in \mathbb{R}^n$ and $n \times n$ covariance matrix $K = (k_{i,j}) \in \mathbb{R}^{n \times n}$, to the first $n-1$ co-ordinates, is precisely a Gaussian distribution with $n-1$-dimensional vector $\left[\mu(t_1), \cdots, \mu(t_{n-1}) \right] \in \mathbb{R}^{n-1}$ and $(n-1) \times (n-1)$ covariance matrix $K = (k_{i,j}) \in \mathbb{R}^{(n-1) \times (n-1)}$. By the Daniell-Kolmogorov theorem, there exists a probability space $(\Omega, \mathcal{F}, \mathcal{P})$ as well as a Gaussian process $X = (X_t)_{t \in T} \sim \mathcal{GP}(\mu, k)$ defined on this space such that any finite dimensional distribution of $\left[X_{t_1}, \cdots, X_{t_n}\right]$ is given by the measure $\gamma_{t_1, \cdots, t_n}$.
\end{proof}

\section{Multivariate Gaussian process}\label{section:mv-gp}

Following the classical theory of Gaussian measure and Gaussian process, we can introduce Gaussain measure on $\mathbb{R}^{n \times d}$ and Gaussian measure on $((\mathbb{R}^n)_T, \mathcal{G})$, and finally define the multivariate Gaussian process.

According to Definition~\ref{def:gm-on-Rn} and Definition~\ref{def:matrix-gm}, we can have a definition of Gaussian measure on $\mathbb{R}^{n \times d}$.



\begin{definition}[Gaussian measure on $\mathbb{R}^{n \times d}$]
 Let $\gamma$ be a Borel probability measure on $\mathbb{R}^{n \times d}$. For each $\bm{a} \in \mathbb{R}^{nd}$, denote a random variable $Y(\bm{x} \in \mathbb{R}^{n \times d})$ as a mapping $\bm{x} \mapsto \langle \bm{a}, \vc(\bm{x}) \rangle \in \mathbb{R}$ on the probability space $(\mathbb{R}^{n \times d}, \mathcal{B}(\mathbb{R}^{n \times d}), \gamma)$. The Borel probability measure $\gamma$ is a Gaussian measure on $\mathbb{R}^{n \times d}$ if and only if the random variable $Y$ is Gaussian for each $\bm{a}$.
\end{definition}

Similarly to the introduction in Gaussian process, now we consider the space $(\mathbb{R}^{d})_T$ of all $\mathbb{R}^{d}$-valued functions on $T$. 
Let $\mathcal{G}$ be a $\sigma$-algebra generated by all cylinder sets where each cylinder set here defined as a subset of the form $\{\bm{f}: \bm{f}(t_i) \in B_i, 1 \leq i \leq n \}$ for some $n \geq 1, t_i \in T$ and some Borel sets $B_{i} \subseteq \mathbb{R}^{d}$. 
Also, we can define the smallest topology on $\mathbb{R}^d$ that makes the projection mappings $\Xi_{t_{1}, \ldots, t_{n}}(f)=\left[f\left(t_{1}\right), \ldots, f\left(t_{n}\right)\right]$ from $(\mathbb{R}^{d})_T$ to
$\mathbb{R}^{n \times d}$ measurable, and define $\mathcal{G}$ as the Borel $\sigma$-algebra of this topology.
Thus we can have a definition of Gaussian measure on $((\mathbb{R}^d)_T, \mathcal{G})$.

\begin{definition}[Gaussian measure on $((\mathbb{R}^d)_T, \mathcal{G})$]
A measure $\gamma$ on $((\mathbb{R}^d)_T, \mathcal{G})$ is called as a Gaussian measure if for any $n \geq 1$ and $t_1, \cdots, t_n \in T$, the push-forward measure $\gamma \circ \Xi_{t_1, \cdots, t_n}^{-1}$ on $\mathbb{R}^{n \times d}$ is a Gaussian measure. 
\end{definition}

Since the relationship between Gaussian process and Gaussian measure in  Theorem~\ref{thm:relationshipGMandGP}, we can well define \emph{\textbf{multivariate Gaussian process} (MV-GP)}.

\begin{definition}[$d$-variate Gaussian process]
Given a Gaussian measure on $((\mathbb{R}^d)_T, \mathcal{G})$, $d \geq 1$, the co-ordinate random vector $\Xi = (\Xi_t)_{t \in T}$ on the probability space $((\mathbb{R}^d)_T, \mathcal{G}, \gamma)$ is said to be from a \emph{$d$-variate Gaussian process}.
\end{definition}

\begin{theorem}[Existence of $d$-variate Gaussian process] \label{thm:d-variateGP}
For any index set $T$, any vector-valued mean function $\bm{u} : T \mapsto \mathbb{R}^d$, 
any covariance function $k: T \times T \mapsto \mathbb{R}$ and any positive semi-definite parameter matrix $\Lambda \in \mathbb{R}^{d \times d}$, 
there exists a probability space $(\Omega, \mathcal{G}, \mathcal{P})$ and a $d$-variate Gaussian process $\bm{f}(x)$ on this space, whose mean function is $\bm{u}$, covariance function is $k$ and parameter matrix is $\Lambda$ 
, such that,
\begin{itemize}
    \item $\mathbb{E}[\bm{f}(t)] = \bm{u}(t), \quad \forall t \in T,$
    \item $\mathbb{E}\left[(\bm{f}(t_s)- \bm{u}(t_s))(\bm{f}(t_l)- \bm{u}(t_l))^{\T}\right] = \tr(\Lambda)k(t_s,t_l),\quad \forall t_s,t_l \in T$
    \item 
    $\mathbb{E}[(\textbf{F}_{t_1, \cdots, t_n} - M_{t_1, \cdots, t_n})^\T (\textbf{F}_{t_1, \cdots, t_n} - M_{t_1, \cdots, t_n}) ] = \tr(K_{t_1, \cdots, t_n}) \Lambda 
    , \quad \forall n\geq 1, t_1, \cdots, t_n \in T$, where
    \begin{align*}
        M_{t_1, \cdots, t_n} &= [\bm{u}(t_1)^\T, \cdots, \bm{u}(t_n)^\T]^\T \\
        F_{t_1, \cdots, t_n} &= [\bm{f}(t_1)^\T, \cdots, \bm{f}(t_n)^\T]^\T \\
        K_{t_1, \cdots, t_n} &= 
        \begin{bmatrix}
        k(t_1, t_1)      & \cdots    & k(t_1, t_n)      \\
        \vdots  &  \ddots   & \vdots \\
        k(t_n, t_1)     & \cdots    & k(t_n, t_n)
        \end{bmatrix}
    \end{align*}
\end{itemize}
It denotes $\bm{f} \sim \mathcal{MGP}_d(\bm{u}, k, \Lambda )$.
\end{theorem}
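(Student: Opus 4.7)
The plan is to mirror the proof of Theorem~\ref{thm:gp-existence}, replacing univariate Gaussian marginals with matrix-variate Gaussian marginals, and then to invoke the $((\mathbb{R}^d)_T, \mathcal{G})$ analogue of the correspondence between Gaussian measures and Gaussian processes that is implicit in Theorem~\ref{thm:relationshipGMandGP}. Concretely, for every finite collection $t_1, \ldots, t_n \in T$, I would propose the candidate finite-dimensional law
\[
\gamma_{t_1, \ldots, t_n} = \mathcal{MN}_{n,d}\bigl(M_{t_1, \ldots, t_n},\, K_{t_1, \ldots, t_n},\, \Lambda\bigr)
\]
on $\mathbb{R}^{n \times d}$. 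This is a well-defined matrix-variate Gaussian measure because $K_{t_1, \ldots, t_n}$ is positive semi-definite (since $k$ is a covariance function) and $\Lambda$ is positive semi-definite by hypothesis.

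The central step is then to check the two Kolmogorov consistency conditions for the family $\{\gamma_{t_1, \ldots, t_n}\}$: invariance under permutation of the indices, and compatibility under marginalization to sub-collections. Permutation invariance follows at once by simultaneously permuting the rows of $M_{t_1, \ldots, t_n}$ and the corresponding rows and columns of $K_{t_1, \ldots, t_n}$. Marginalization is exactly the content of the row-partition statement of Theorem~\ref{thm:MarginCondition-G}: writing the rows to be retained as $X_{1r}$, the first marginal is $\mathcal{MN}_{n_1,d}(M_{1r}, \Sigma_{11}, \Lambda)$, and $M_{1r}$ and $\Sigma_{11}$ are precisely the mean stack and covariance matrix obtained from evaluating $\bm{u}$ and $k$ on the retained indices. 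This step is, in my opinion, the main obstacle: the delicate point is that the inter-row covariance is carried by $K$ while the inter-column covariance is carried by the fixed matrix $\Lambda$, and one must confirm that marginalizing over rows leaves the column parameter $\Lambda$ intact \textemdash{} it is exactly here that the particular tensor-product structure of the matrix-variate Gaussian enters essentially.

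Once consistency is in hand, the Daniell--Kolmogorov extension theorem produces a probability measure $\gamma$ on $((\mathbb{R}^d)_T, \mathcal{G})$ whose push-forwards $\gamma \circ \Xi_{t_1, \ldots, t_n}^{-1}$ coincide with $\gamma_{t_1, \ldots, t_n}$. Each such push-forward is matrix-variate Gaussian, and hence, by Definition~\ref{def:matrix-gm} combined with the earlier definition of Gaussian measure on $\mathbb{R}^{n \times d}$, every linear functional $\langle \bm{a}, \vc(\bm{x}) \rangle$ is univariate Gaussian. Therefore $\gamma$ qualifies as a Gaussian measure on $((\mathbb{R}^d)_T, \mathcal{G})$, and the coordinate process $\Xi$ on $((\mathbb{R}^d)_T, \mathcal{G}, \gamma)$ is the required $d$-variate Gaussian process $\bm{f}$.

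Finally, the three moment assertions are read off directly from the matrix-variate structure of $F_{t_1, \ldots, t_n} \sim \mathcal{MN}_{n,d}(M_{t_1, \ldots, t_n}, K_{t_1, \ldots, t_n}, \Lambda)$: the $i$th row of $M$ gives $\mathbb{E}[\bm{f}(t_i)] = \bm{u}(t_i)$; the standard identity $\mathbb{E}[(F-M)(F-M)^{\T}] = \tr(\Lambda)\,K$ yields the second bullet at the level of $(s,l)$-blocks; and $\mathbb{E}[(F-M)^{\T}(F-M)] = \tr(K)\,\Lambda$ is the third bullet verbatim. No computation beyond these well-known matrix-variate moment formulae (and the marginalization lemma used for consistency) is required.
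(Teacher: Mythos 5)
Your proposal is correct and follows essentially the same route as the paper: specify the finite-dimensional laws as $\mathcal{MN}_{n,d}(M_{t_1,\ldots,t_n}, K_{t_1,\ldots,t_n}, \Lambda)$, obtain consistency from the row-marginalization statement of Theorem~\ref{thm:MarginCondition-G}, and invoke the Daniell--Kolmogorov extension theorem. If anything, your write-up is slightly more complete than the paper's, since you also check permutation invariance, note explicitly that the extension is a Gaussian measure on $((\mathbb{R}^d)_T,\mathcal{G})$, and verify the three stated moment identities, which the paper leaves implicit.
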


\begin{proof}
Given $n > 1$, for every $t_1, \cdots, t_n \in T$, a Gaussian measure $\gamma_{t_1, \cdots, t_n}$ on $\mathbb{R}^{n \times d}$ satisfies the assumptions of \emph{Daniell-Kolmogorov theorem} because the projection of a matrix Gaussian distribution on $\mathbb{R}^{n \times d}$ with $\left[\bm{u}(t_1)^{\T}, \cdots, \bm{u}(t_n)^{\T}\right]^{\T} \in \mathbb{R}^{n \times d}$, $n \times n$ column covariance matrix $K = (k_{i,j}) \in \mathbb{R}^{n \times n}$, and $d \times d$ row covariance matrix $\Lambda  \in \mathbb{R}^{d \times d}$,
to the first $n-1$ co-ordinates, is precisely the Gaussian distribution with 
$\left[\bm{u}(t_1)^{\T}, \cdots, \bm{u}(t_{n-1})^{\T} \right]^{\T} \in \mathbb{R}^{(n-1) \times d}$, $(n-1) \times (n-1)$ column covariance matrix $K = (k_{i,j}) \in \mathbb{R}^{(n-1) \times (n-1)}$, and row covariance matrix $\Lambda  \in \mathbb{R}^{d \times d}$.
This is due to the conditional property of matrix Gaussian distribution shown in Theorem~\ref{thm:MarginCondition-G}.
By the Daniell-Kolmogorov theorem, there exists a probability space $(\Omega, \mathcal{G}, \mathcal{P})$ as well as a $d$-variate Gaussian process $X = (X_t)_{t \in T} \sim \mathcal{MGP}_d(\bm{u}, k, \Lambda )$ defined on this space such that any finite dimensional distribution of $[X_{t_1}, \cdots, X_{t_n}]$ is given by the measure $\gamma_{t_1, \cdots, t_n}$.
\end{proof}

Following the existence of $d$-variate Gaussian process, we can also achieve some properties as follow.
\begin{proposition}[Strictly stationary]
\label{prop:stationary}

A $d$-variate Gaussian process $\mathcal{MGP}_d(\bm{u}, k, \Lambda)$ is said to be strictly stationary if 
\begin{equation*}
    \bm{u}(t) = \bm{u}(t + h), \quad
    k(t_s + h, t_l + h) = k(t_s, k_l), \forall t, t_s, t_l, h \in T. 
\end{equation*}
\end{proposition}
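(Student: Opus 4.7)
The plan is to reduce strict stationarity of $\mathcal{MGP}_d(\bm{u},k,\Lambda)$ to equality of two finite-dimensional matrix Gaussian distributions, and then to read off the conditions on $\bm u$ and $k$ from the uniqueness of the matrix-Gaussian parameters. Recall that a vector-valued process $\bm f$ is strictly stationary iff, for every $n\geq 1$, every $t_1,\ldots,t_n\in T$ and every $h\in T$,
\begin{equation*}
\bigl(\bm f(t_1),\ldots,\bm f(t_n)\bigr)\;\stackrel{d}{=}\;\bigl(\bm f(t_1+h),\ldots,\bm f(t_n+h)\bigr).
\end{equation*}
By Theorem~\ref{thm:d-variateGP}, the left-hand side has law $\mathcal{MN}_{n,d}\!\bigl(M_{t_1,\ldots,t_n},K_{t_1,\ldots,t_n},\Lambda\bigr)$ and the right-hand side has law $\mathcal{MN}_{n,d}\!\bigl(M_{t_1+h,\ldots,t_n+h},K_{t_1+h,\ldots,t_n+h},\Lambda\bigr)$, with $M$ and $K$ as defined in the theorem.

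The first step I would take is to invoke Definition~\ref{def:matrix-gm}, which says that the matrix Gaussian distribution $\mathcal{MN}_{n,d}(M,K,\Lambda)$ is equivalent, via vectorisation, to $\mathcal{N}_{nd}(\vc(M),\Lambda\otimes K)$. Two multivariate Gaussian distributions coincide iff their mean vectors and covariance matrices coincide; since $\Lambda$ is the same on both sides and $\Lambda\otimes K=\Lambda\otimes K'$ with the same $\Lambda\neq 0$ forces $K=K'$, equality of the two finite-dimensional laws reduces to
\begin{equation*}
M_{t_1,\ldots,t_n}=M_{t_1+h,\ldots,t_n+h}\quad\text{and}\quad K_{t_1,\ldots,t_n}=K_{t_1+h,\ldots,t_n+h}.
\end{equation*}
Written out entrywise, the first equality is $\bm u(t_i)=\bm u(t_i+h)$ for all $i$ and the second is $k(t_i,t_j)=k(t_i+h,t_j+h)$ for all $i,j$.

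Next I would close the equivalence in both directions. The forward direction is immediate: if strict stationarity holds, specialise to $n=1$ to deduce $\bm u(t)=\bm u(t+h)$ for all $t,h\in T$, and specialise to $n=2$ with an arbitrary pair $(t_s,t_l)$ to deduce $k(t_s+h,t_l+h)=k(t_s,t_l)$. For the converse, suppose the two translation-invariance conditions hold. Then for every $n$ and every tuple $(t_1,\ldots,t_n)$ the mean matrix $M_{t_1,\ldots,t_n}$ and the Gram-like matrix $K_{t_1,\ldots,t_n}$ are shift-invariant entrywise, so the two matrix Gaussian laws above share all three parameters and are therefore identical. This yields strict stationarity.

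The only real subtlety, and the step I would be most careful about, is the identifiability argument that passes from equality of the vectorised Gaussian laws to equality of the individual factors $M$, $K$ and $\Lambda$: the pair $(K,\Lambda)$ is only determined up to the rescaling $(cK,\Lambda/c)$. This is not a genuine obstacle here because $\Lambda$ is held fixed in both laws (it is the common row-covariance of $\bm f$), so the Kronecker factorisation pins down $K$ uniquely provided $\Lambda\neq 0$; the degenerate case $\Lambda=0$ corresponds to a deterministic process equal to $\bm u$ almost surely, for which strict stationarity trivially reduces to $\bm u(t)=\bm u(t+h)$ alone and the condition on $k$ is vacuous. Everything else is a routine unpacking of the finite-dimensional marginals supplied by Theorem~\ref{thm:d-variateGP}.
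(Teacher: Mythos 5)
Your proof is correct, and for the direction the paper actually proves it is essentially the paper's argument: write the finite-dimensional law of $(\bm{f}(t_1),\ldots,\bm{f}(t_n))$ as $\mathcal{MN}_{n,d}(M_{t_1,\ldots,t_n},K_{t_1,\ldots,t_n},\Lambda)$ via Theorem~\ref{thm:d-variateGP}, substitute the translation invariance of $\bm{u}$ and $k$, and conclude that the shifted and unshifted laws coincide for every $n$ and every tuple of indices. Where you differ is in scope: the paper treats the proposition as a sufficient condition (matching its wording ``is said to be strictly stationary if'') and proves only that shift-invariance of $\bm{u}$ and $k$ implies equality of all finite-dimensional distributions, whereas you establish an equivalence by adding the necessity direction. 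That extra direction is what forces your identifiability step --- passing from equality of the vectorised laws $\mathcal{N}_{nd}(\vc(M),\Lambda\otimes K)$ to equality of the individual parameters $M$ and $K$ --- and you handle it correctly: since $\Lambda$ is the same fixed row covariance on both sides, $\Lambda\otimes K=\Lambda\otimes K'$ with $\Lambda\neq 0$ forces $K=K'$, and your remark that the usual rescaling ambiguity $(cK,\Lambda/c)$ is irrelevant here, with the degenerate case $\Lambda=0$ reducing to the mean condition alone, is apt. The paper never needs this discussion because it only substitutes into the parameters; your version buys a genuine characterisation of strict stationarity at the cost of a mild nondegeneracy argument.
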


\begin{proof}
Assume $\bm{f} \sim \mathcal{MGP}_d(\bm{u}, k, \Lambda)$, then for $\forall n\geq 1, t_1, \cdots, t_n \in T$, 
$$
[\bm{f}(t_1)^{\mathrm{T}},\ldots,\bm{f}(t_n)^{\mathrm{T}}]^{\mathrm{T}} \sim \mathcal{MN}(\bm{u}_{t_1, \cdots, t_n},K_{t_1, \cdots, t_n},\Lambda),
$$
where
\begin{equation*}
    \bm{u}_{t_1, \cdots, t_n} = \begin{bmatrix}
        \bm{u} (t_1)      \\
        \vdots  \\
        \bm{u} (t_n)  
        \end{bmatrix} , \quad
    K_{t_1, \cdots, t_n} = 
        \begin{bmatrix}
        k(t_1, t_1)      & \cdots    & k(t_1, t_n)      \\
        \vdots  &  \ddots   & \vdots \\
        k(t_n, t_1)     & \cdots    & k(t_n, t_n)
        \end{bmatrix}
\end{equation*}

Given any time increment $h \in T$, there also exists, 
$$
[\bm{f}(t_1 + h)^{\mathrm{T}},\ldots,\bm{f}(t_n + h)^{\mathrm{T}}]^{\mathrm{T}} \sim \mathcal{MN}(\bm{u}_{t_1+h, \cdots, t_n+h},K_{t_1+h, \cdots, t_n+h},\Lambda),
$$
where
\begin{equation*}
    \bm{u}_{t_1+h, \cdots, t_n + h} = \begin{bmatrix}
        \bm{u}(t_1 +h )      \\
        \vdots  \\
        \bm{u} (t_n +h )  
        \end{bmatrix} , \quad
    K_{t_1 + h, \cdots, t_n + h} = 
        \begin{bmatrix}
        k(t_1 + h, t_1 + h)      & \cdots    & k(t_1 + h, t_n + h)      \\
        \vdots  &  \ddots   & \vdots \\
        k(t_n + h, t_1 + h)     & \cdots    & k(t_n + h, t_n + h)
        \end{bmatrix}.
\end{equation*}

Since $\bm{u}(t) = \bm{u}(t + h),
    k(t_s + h, t_l + h) = k(t_s, k_l), \forall t, t_s, t_l, h \in T$, 
\begin{equation*}
    \bm{u}_{t_1+h, \cdots, t_n + h} = \begin{bmatrix}
        \bm{u}(t_1 +h)      \\
        \vdots  \\
        \bm{u} (t_n +h)  
        \end{bmatrix} 
        =
        \begin{bmatrix}
        \bm{u} (t_1)      \\
        \vdots  \\
        \bm{u} (t_n)  
        \end{bmatrix} 
        = \bm{u}_{t_1, \cdots, t_n }, 
\end{equation*}
\begin{equation*}
    K_{t_1 + h, \cdots, t_n + h} = 
        \begin{bmatrix}
        k(t_1 + h, t_1 + h)      & \cdots    & k(t_1 + h, t_n + h)      \\
        \vdots  &  \ddots   & \vdots \\
        k(t_n + h, t_1 + h)     & \cdots    & k(t_n + h, t_n + h)
        \end{bmatrix} 
        = 
        \begin{bmatrix}
        k(t_1 , t_1)      & \cdots    & k(t_1, t_n)      \\
        \vdots  &  \ddots   & \vdots \\
        k(t_n, t_1)     & \cdots    & k(t_n, t_n)
        \end{bmatrix} 
        = K_{t_1, \cdots, t_n}.
\end{equation*}
Therefore, $[\bm{f}(t_1 + h)^{\mathrm{T}},\ldots,\bm{f}(t_n + h)^{\mathrm{T}}]^{\mathrm{T}} $ has the same distribution as $[\bm{f}(t_1)^{\mathrm{T}},\ldots,\bm{f}(t_n)^{\mathrm{T}}]^{\mathrm{T}} $. Due to the arbitrary choice of $n>1$ and $t_1, \cdots, t_n \in T$, $\bm{f} \sim \mathcal{MGP}_d(\bm{u}, k, \Lambda)$ is a strictly stationary process.
\end{proof}

\begin{proposition}[Independence]
\label{Prop:independence}
A $d$ collection of functions $\{\bm{f}_i\}_{i = 1,2, \cdots, d}$ identically independently follows a Gaussian process $\mathcal{GP}(\mu, k)$ if and only if 
\begin{equation*}
\bm{f} = [\bm{f}_1, \bm{f}_2, \cdots, \bm{f}_d] \sim \mathcal{MGP}_d(\bm{u}, k, \Lambda),   
\end{equation*}
where $\bm{u} = [\mu, \cdots, \mu] \in \mathbb{R}^d$ and $\Lambda$ is any diagonal positive semi-definite matrix.
\end{proposition}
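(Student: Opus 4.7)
The plan is to leverage the characterization of both Gaussian and multivariate Gaussian processes by their finite-dimensional distributions, as furnished by Theorems~\ref{thm:gp-existence} and~\ref{thm:d-variateGP}, and so reduce the iff to a statement about a single matrix-normal random matrix. I will fix an arbitrary $n \geq 1$ and times $t_1, \ldots, t_n \in T$, form the $n \times d$ matrix $F = [\bm{f}(t_1)^{\T}, \ldots, \bm{f}(t_n)^{\T}]^{\T}$ whose $i$-th column equals $[\bm{f}_i(t_1), \ldots, \bm{f}_i(t_n)]^{\T}$, and then show that $F$ is matrix normal with a diagonal row-parameter $\Lambda$ if and only if the columns of $F$ are mutually independent and identically distributed multivariate Gaussian vectors.

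For the forward direction, I will assume that the $\bm{f}_i$ are independent copies of $\mathcal{GP}(\mu, k)$. Each column of $F$ is then $\mathcal{N}_n(\bm{m}, K_{t_1, \ldots, t_n})$ with $\bm{m} = [\mu(t_1), \ldots, \mu(t_n)]^{\T}$, and mutual independence across columns gives $\vc(F) \sim \mathcal{N}_{nd}(\vc(M), I_d \otimes K_{t_1, \ldots, t_n})$. By Definition~\ref{def:matrix-gm}, this is precisely $\mathcal{MN}_{n, d}(M, K_{t_1, \ldots, t_n}, I_d)$; since $n$ and the times were arbitrary, the finite-dimensional distributions of $\bm{f}$ coincide with those of $\mathcal{MGP}_d(\bm{u}, k, I_d)$, and Theorem~\ref{thm:d-variateGP} identifies $\bm{f}$ as a $d$-variate Gaussian process with diagonal parameter $\Lambda = I_d$.

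For the backward direction, I will begin with $\bm{f} \sim \mathcal{MGP}_d(\bm{u}, k, \Lambda)$ for $\Lambda = \mathrm{diag}(\lambda_1, \ldots, \lambda_d)$, which via Theorem~\ref{thm:d-variateGP} and Definition~\ref{def:matrix-gm} yields $\vc(F) \sim \mathcal{N}_{nd}(\vc(M), \Lambda \otimes K_{t_1, \ldots, t_n})$ for every choice of time points. Because $\Lambda$ is diagonal, $\Lambda \otimes K_{t_1, \ldots, t_n}$ is block diagonal with $i$-th block $\lambda_i K_{t_1, \ldots, t_n}$; the columns of $F$ are therefore uncorrelated and, being jointly Gaussian, independent, with the $i$-th column distributed as $\mathcal{N}_n(\bm{m}, \lambda_i K_{t_1, \ldots, t_n})$. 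Since this holds for every finite collection of times, each $\bm{f}_i$ is a Gaussian process with mean $\mu$, and the $\bm{f}_i$ are mutually independent.

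The delicate step will be the ``identically distributed'' clause, because the matrix-normal parameterization carries the well-known scale ambiguity $(K, \Lambda) \leftrightarrow (\alpha K, \alpha^{-1}\Lambda)$. I will therefore read ``$\Lambda$ is any diagonal positive semi-definite matrix'' as permitting such a scalar to be absorbed into $k$, so that genuine identical marginal laws for the $\bm{f}_i$ force the diagonal entries of $\Lambda$ to coincide (equivalently, $\Lambda$ proportional to $I_d$); this brings the backward direction into alignment with the $\Lambda = I_d$ produced on the forward side.
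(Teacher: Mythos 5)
Your argument is correct, but it runs along a different track from the paper's. The paper works directly with the trace-form second-moment identities of Theorem~\ref{thm:d-variateGP}: writing the columns as $\bm{\xi}_i=[f_i(t_1),\cdots,f_i(t_n)]^{\T}$, it computes $\mathbb{E}[\bm{\xi}_i^{\T}\bm{\xi}_j]=\tr(K_{t_1,\cdots,t_n})\Lambda_{ij}$, deduces uncorrelatedness of the columns from the diagonality of $\Lambda$ (and, conversely, diagonality of $\Lambda$ from independence), and then invokes joint Gaussianity to pass from non-correlation to independence; it never explicitly identifies the marginal law of each column. You instead go through the vectorisation characterisation of Definition~\ref{def:matrix-gm}: $\vc(F)\sim\mathcal{N}_{nd}(\vc(M),\Lambda\otimes K)$, whose block-diagonal structure for diagonal $\Lambda$ gives both independence of the columns and their exact marginals $\mathcal{N}_n(\bm{m},\lambda_i K)$, and in the converse direction you assemble the matrix-normal law $\mathcal{MN}_{n,d}(M,K,I_d)$ directly from the i.i.d.\ columns. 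Your route buys two things the paper's proof does not deliver cleanly: the sufficiency direction actually constructs the matrix-Gaussian finite-dimensional distributions rather than asserting them after noting $\Lambda_{ij}=0$, and the necessity direction exposes the fact that a general diagonal $\Lambda=\mathrm{diag}(\lambda_1,\cdots,\lambda_d)$ yields columns distributed as $\mathcal{GP}(\mu,\lambda_i k)$, so that ``identically distributed as $\mathcal{GP}(\mu,k)$'' really forces $\Lambda$ to be (proportional to) the identity up to the usual $(K,\Lambda)\leftrightarrow(\alpha K,\alpha^{-1}\Lambda)$ rescaling. The paper's proof silently glosses over this last point, so your explicit handling of the scale ambiguity is a genuine improvement on the statement as written rather than a deviation from it; just be aware that what you prove is the proposition under that normalised reading (diagonal entries equal after absorbing a scalar into $k$), not for literally arbitrary diagonal positive semi-definite $\Lambda$. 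The only step left implicit on your side is the routine passage from independence of the finite-dimensional restrictions at common time points to independence of the processes themselves (a $\pi$--$\lambda$ argument over cylinder sets), which the paper omits as well.
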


\begin{proof}
Necessity: if $\bm{f} \sim \mathcal{MGP}_d(\bm{u}, k, \Lambda)$, then for $\forall n\geq 1, t_1, \cdots, t_n \in T$, 
$$
[\bm{f}(t_1)^{\mathrm{T}},\ldots,\bm{f}(t_n)^{\mathrm{T}}]^{\mathrm{T}} \sim \mathcal{MN}(\bm{u}_{t_1, \cdots, t_n},K_{t_1, \cdots, t_n},\Lambda),
$$
where, 
\begin{equation*}
    \bm{u}_{t_1, \cdots, t_n} = \begin{bmatrix}
        \bm{u} (t_1)      \\
        \vdots  \\
        \bm{u} (t_n)  
        \end{bmatrix} , \quad
    K_{t_1, \cdots, t_n} = 
        \begin{bmatrix}
        k(t_1, t_1)      & \cdots    & k(t_1, t_n)      \\
        \vdots  &  \ddots   & \vdots \\
        k(t_n, t_1)     & \cdots    & k(t_n, t_n)
        \end{bmatrix}
\end{equation*}
Rewrite the left, we obtain
$$
[\bm{\xi}_1, \bm{\xi}_2, \cdots, \bm{\xi}_d] \sim \mathcal{MN}(\bm{u}_{t_1, \cdots, t_n},K_{t_1, \cdots, t_n},\Lambda),
$$
where $\bm{\xi}_i = [f_i(t_1), f_i(t_2), \cdots, f_i(t_n)]^{\T}$. Since $\Lambda$ is a diagonal matrix, for any $i \neq j$
$$
\mathbb{E}[\bm{\xi}_i^{\T} \bm{\xi}_{j}] = \tr( K_{t_1, \cdots, t_n}) \Lambda_{ij} = \tr( K_{t_1, \cdots, t_n}) \cdot 0 = 0.
$$
Because $\bm{\xi}_i$ and $\bm{\xi}_j$ are any finite number of realisations of $\bm{f}_i$ and $\bm{f}_j$ respectively from the same Gaussian process, $\bm{f}_i$ and $\bm{f}_j$ are uncorrelated. Due to  joint finite realisations of $\bm{f}_i$ and $\bm{f}_j$ follow Gaussian, non-correlation implies independence.

Sufficiency: if $\{\bm{f}_i\}_{i = 1,2, \cdots, d} \sim \mathcal{GP}(0, k)$ are independent, for $\forall n\geq 1, t_1, \cdots, t_n \in T$ and for any $i \neq j$,
$$
0 = \mathbb{E}[\bm{\xi}_i^{\T} \bm{\xi}_{j}] = \tr( K_{t_1, \cdots, t_n}) \Lambda_{ij}.
$$
Since $\tr( K_{t_1, \cdots, t_n})$ is non-zero, $\Lambda_{ij}$ must be 0. Due the arbitrary choices of $i, j$, $\Lambda$ must be diagonal.
That is to say, $\bm{\xi}_i = [f_i(t_1), f_i(t_2), \cdots, f_i(t_n)]^{\T}$ can be written as a matrix Gaussian distribution $\mathcal{MN}(\bm{u}_{t_1, \cdots, t_n}, K_{t_1, \cdots, t_n}, \Lambda)$ where $\Lambda$ is a diagonal positive semi-definite matrix. Since for $\forall n\geq 1, t_1, \cdots, t_n \in T$ we hold the above result, $\{\bm{f}_i\}_{i = 1,2, \cdots, d}$ can be considered identically independently Gaussian process $\mathcal{GP}(\mu, k)$. 
\end{proof}

\section{Example: special cases}\label{section:example}
Instinctively, a special case is \emph{centred multivariate Gaussian process} where vector-valued mean function $\bm{\mu} = \bm{0}$. The 50 realisation samples generated from centred multivariate Gaussian process are demonstrated in \autoref{fig:MV-GP-sample}:Left. 
Furthermore, we can derive the multivariate Gaussian white noise and the multivariate Brownian motion.

\begin{figure}[htbp]
	\centering
	\includegraphics[width=0.49\linewidth]{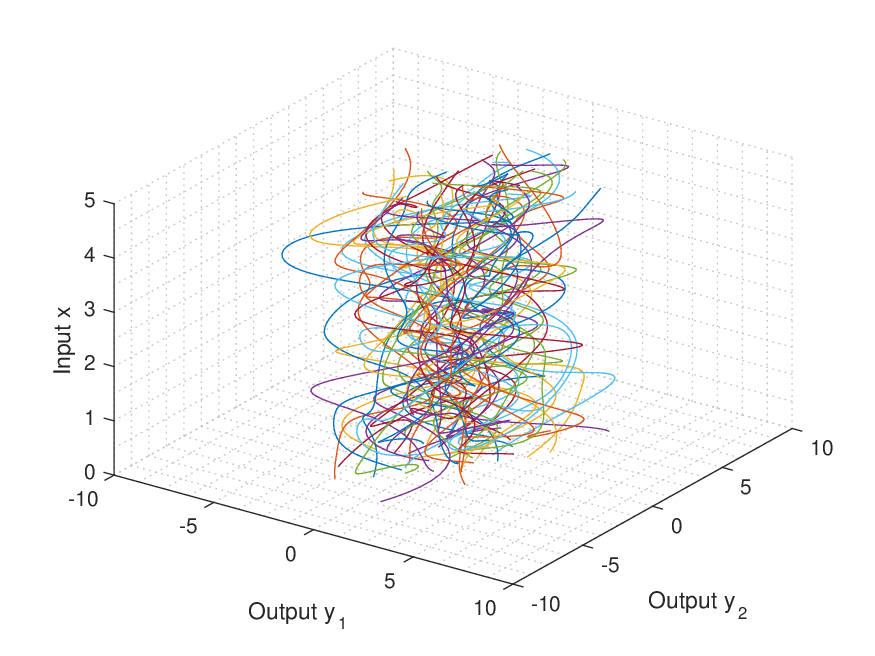}
	\includegraphics[width=0.49\linewidth]{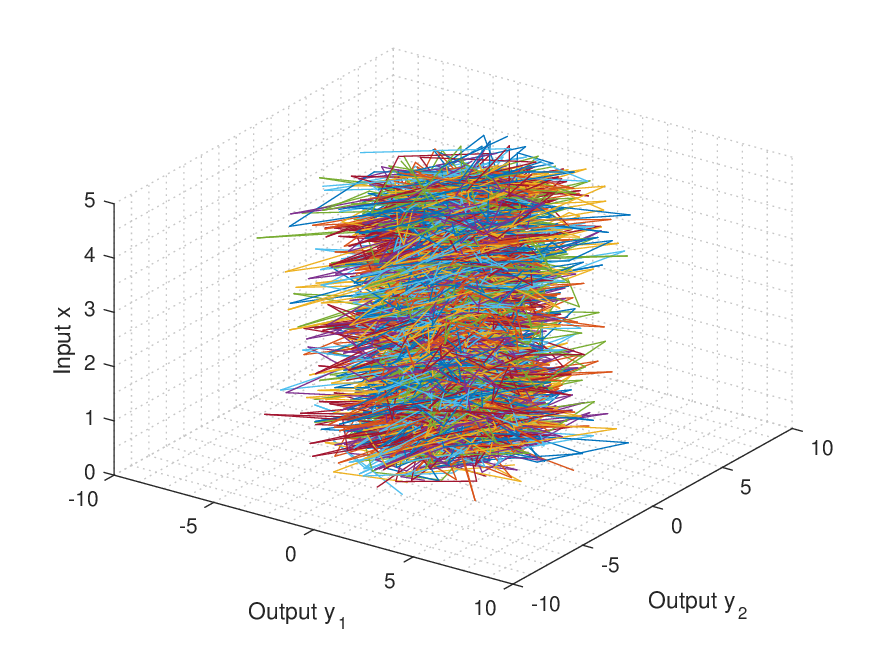}
	\caption{\textbf{The 50 random realisation sample points generated from of 2-variate Gaussian process}. 
	\textbf{Left}: centred 2-variate Gaussian process with Gaussian covariance function $k(t_s, t_l) = 1.5 \exp(-(t_s - t_l)^2/2/0.5^2)$. 
	\textbf{Right}: 2-variate Gaussian white noise as a 2-variate Gaussian process with covariance function $k(t_s, t_l) = 1.5\delta(t_s, t_l)$.
	}
	\label{fig:MV-GP-sample}
\end{figure}

\subsection{Multivariate Gaussian white noise}

\begin{proposition}
[$d$-variate Gaussian white noise]
A $d$-variate Gaussian process $\mathcal{MGP}_d (\bm{u}, k, \Lambda)$ is said to be $d$-variate Gaussian white noise if $\bm{u} = \bm{0}$ and $k(t_s, t_l) = \sigma^2 \delta(t_s - t_l)$, where $\delta$ is Dirac delta function and $t_s, t_l \in T$.
\end{proposition}

\begin{proof}
Let $\bm{f} = [\bm{f}_1,\cdots, \bm{f}_d ] \sim \mathcal{MGP}_d(\bm{u}, k, \Lambda)$, then 
$\mathbb{E}[\bm{f}(t)] = \bm{u}(t) = \bm{0}, \quad \forall t \in T$, and 
\begin{equation*}
    \mathbb{E}\left[(\bm{f}(t_s)- \bm{u}(t_s))(\bm{f}(t_l)- \bm{u}(t_l))^{\T}\right] = \tr(\Lambda)k(t_s,t_l) = \begin{cases}
    0 & \mbox{if }t_s \neq t_l \\
    \sigma^2\tr(\Lambda) & \mbox{if }t_s = t_l 
    \end{cases}
    ,\quad \forall t_s,t_l \in T
\end{equation*}
Furthermore, $\forall n\geq 1, t_1, \cdots, t_n \in T$, there exists, 
\begin{equation*}
    \mathbb{E}[\textbf{F}_{t_1, \cdots, t_n}^\T \textbf{F}_{t_1, \cdots, t_n}] = \tr(K_{t_1, \cdots, t_n}) \Lambda = \tr(\sigma^2 \mathrm{I}_{d \times d})\Lambda = d \sigma^2 \Lambda,
\end{equation*}
where $F_{t_1, \cdots, t_n} = [\bm{f}(t_1)^\T, \cdots, \bm{f}(t_n)^\T]^\T$.
\end{proof}

\begin{Remark} We observed in the proof that $d$-variate Gaussian white noise has independence property as white noise along with $T$, but it has correlation along with $d$-variate dimension. Therefore, $d$-variate Gaussian white noise is also called as variate-dependent Gaussian white noise or variate-correlated Gaussian white noise, which is distinct from the traditional $d$-dimensional independent Gaussian white noise. Here are 50 realisation samples generated from multivariate Gaussian white noise shown in \autoref{fig:MV-GP-sample}:Right. 
\end{Remark}


\subsection{Multivariate Brownian motion}
\label{MBM}
According to the Chapter 2 of the book written by \citet{cite:Gall}, there is a definition of Brownian motion, which is a Gaussian white noise whose intensity is Lebesgue measure.  
Since Brownian motion is a special case of Gaussian process with continuous sample paths, mean function $u = 0$ and covariance function $k(s,t) = \min(s,t)$, we propose an example, which is $d$-variate Brownian motion, as a special case of $d$-variate Gaussian process with vector-valued mean function $\bm{u} = \bm{0}$, covariance function $k(s,t) = \min(s,t)$ and parameter matrix $\Lambda$. 
Based on the Theorem \ref{thm:d-variateGP}, we derived some properties of the traditional Brownian motion to a more general vector-valued case. 

\begin{definition}[$d$-variate Brownian motion]
A $d$-variate Gaussian process $\mathcal{MGP}_d(\bm{u}, k, \Lambda)$ is said to be $d$-variate Brownian motion if all sample paths are continuous, $\bm{u} = \bm{0}$ and $k(t_s, t_l) =\min (t_s - t_l)$.
\end{definition}
Let $B_{t}$ be a $d$-variate Brownian motion, which means for all $0 \leq t_1 \leq \cdots \leq t_n$ the random variable $Z = (B_{t_1}^{\T}, \ldots, B_{t_n}^{\T})^{\T} \in \mathbb{R}^{n \times d}$ has a normal distribution on the probability space $(\Omega, \mathcal{G}, \mathcal{P})$ we mentioned before in Theorem \ref{thm:d-variateGP}. 
There exists a matrix $M \in \mathbb{R}^{n \times d}$ and two non-negative definite matrices $C = [c]_{jm} \in \mathbb{R}^{n \times n}$ and $\Lambda = [\lambda]_{ab} \in \mathbb{R}^{d \times d}$ such that 
\begin{eqnarray*}
\mathbb{E} \left[ \exp \left( \text{i} \sum_{j = 1}^{n} W_{j, \cdot} Z_{j,\cdot}^{\T} \right) \right] 
&=& \exp \left( - \dfrac{1}{2} \sum_{j,m} W_{j, \cdot} c_{jm} W_{m, \cdot}^{\T} + \text{i} \sum_j W_{j, \cdot} M_{j,\cdot}^{\T} \right)
\\\mathbb{E} \left[ \exp \left( \text{i} \sum_{a = 1}^{d} W_{\cdot, a} Z_{\cdot,a}^{\T} \right) \right] 
&=& \exp \left( - \dfrac{1}{2} \sum_{a,b} W_{\cdot, a} \lambda_{ab} W_{\cdot, b}^{\T} + \text{i} \sum_a W_{\cdot, a} M_{\cdot,a}^{\T} \right)
\end{eqnarray*}
where $W = [w]_{ja} \in \mathbb{R}^{n \times d}$ and $\text{i}$ is the imaginary unit. 
Moreover, we also have the mean value $M = \mathbb{E}[Z]$ and two covariance matrices
\begin{eqnarray*}
c_{jm} &=& \mathbb{E} [(Z_{j,\cdot} - M_{j,\cdot}) (Z_{m,\cdot} - M_{m,\cdot})^{\T}]
\\\lambda_{ab} &=& \mathbb{E} [(Z_{\cdot,a} - M_{\cdot,a})^{\T} (Z_{\cdot,b} - M_{\cdot,b})]. 
\end{eqnarray*}
Assume that the mean matrix $M$ here is a zero matrix, i.e. $\mathbb{E}[Z] = \mathbb{E}[Z \vert t = 0] = 0$, $I_d = \Lambda$ and
\begin{equation*}
C = 
\begin{bmatrix}
    t_1 & t_1 & \cdots & t_1 \\
    t_1 & t_2 & \cdots & t_2 \\
    \vdots  &  \vdots &    & \vdots \\
    t_1 & t_2 & \cdots & t_n
\end{bmatrix}.
\end{equation*}
Hence, $\mathbb{E}[B_t]=0$ for all $t \geq 0$ and 
\begin{align*}
\mathbb{E}[(B_t) (B_t)^{\T}] = d t, \;
\mathbb{E}[(B_t) (B_s)^{\T}] = d \min(s,t),
\\\mathbb{E}[(B_t)^{\T} (B_t)] = t \Lambda, \;
\mathbb{E}[(B_t)^{\T} (B_s)] = \min(s,t) \Lambda.
\end{align*}
Moreover, we have
\begin{align*}
\mathbb{E} [(B_t - B_s) (B_t - B_s)^{\T}]
= \mathbb{E} [B_t B_t^{\T} - 2 B_s B_t^{\T} + B_s B_s^{\T}]
= d \lvert t - s \rvert
\\\mathbb{E} [(B_t - B_s)^{\T} (B_t - B_s)]
= \mathbb{E} [ B_t^{\T} B_t - 2 B_s^{\T} B_t + B_s^{\T} B_s]
= \lvert t - s \rvert \Lambda.
\end{align*}

Note that this $d$-variate Brownian motion $B_t$ still has independent increments since $\mathbb{E} [(B_{t_i} - B_{t_{i-1}}) (B_{t_j} - B_{t_{j-1}})^{\T}] = 0$ and $\mathbb{E} [(B_{t_i} - B_{t_{i-1}})^{\T} (B_{t_j} - B_{t_{j-1}})] = 0$ when $t_i < t_j$ holds for all $0 < t_1 < \cdots < t_n$.

\begin{Remark} Similar to $d$-variate Gaussian white noise, $d$-variate Brownian motion also has independence property along with $T$, but it has correlation along with $d$-variate dimension. Therefore, $d$-variate Brownian motion is also called as variate-dependent Brownian motion or variate-correlated Brownian motion, which is distinct from the "traditional" $d$-dimensional Brownian motion. Actually, the  "traditional" $d$-dimensional Brownian motion is a special case of $d$-variate Brownian motion with diagonal matrix $\Lambda$.
\end{Remark}

As a Brownian motion, we then introduce It\^o lemma for the $d$-variate Brownian motion.
Let $B_t = [B_1(t), \cdots, B_d(t)]$ be the $d$-variate Brownian motion derived in Section \ref{MBM}. 
Then, we have the following lemma.
\begin{Lemma}[It\^o lemma for the $d$-variate Brownian motion]
Let $F$ be a twice continuously differentiable real function on $\mathbb{R}^{d+1}$ and let $\Lambda = [\lambda]_{i,j} \in \mathbb{R}^{d \times d}$ be the covariance matrix for the $d$-variate dimension. Then, 
\begin{align*}
F (t, B_1(t), \cdots, B_d(t)) = F (0, B_1(0), \cdots, B_d(0)) + \sum_{i=1}^d \int_0^t \dfrac{\partial F}{\partial B_i} (s, B_1(s), \cdots, B_d(s)) \dif B_i(s) 
\\
+ \int_0^t \left\{ \dfrac{\partial F}{\partial s} (s, B_1(s), \cdots, B_d(s)) + \dfrac{1}{2} \sum_{i,j=1}^{d} \dfrac{\partial^2 F}{\partial B_i \partial B_j} (s, B_1(s), \cdots, B_d(s)) \lambda_{i,j} \right\} \dif s.
\end{align*}
\end{Lemma}

\begin{proof}
By It\^o lemma and the definition of the $d$-variate Brownian motion, we obtain
\begin{align*}
F (t, B_1(t), \cdots, B_d(t)) &= F (0, B_1(0), \cdots, B_d(0)) + \int_0^t \dfrac{\partial F}{\partial s} (s, B_1(s), \cdots, B_d(s)) \dif s \\ 
& \quad + \sum_{i=1}^d \int_0^t \dfrac{\partial F}{\partial B_i} (s, B_1(s), \cdots, B_d(s)) \dif B_i(s) \\
& \quad + \dfrac{1}{2} \sum_{i,j=1}^{d} \int_0^t \dfrac{\partial^2 F}{\partial B_i \partial B_j} (s, B_1(s), \cdots, B_d(s)) \dif \langle B_i, B_j \rangle (s).
\end{align*}
The proof is complete by $d\langle B_i, B_j \rangle (s) = \lambda_{i,j} \dif s$.
\end{proof}

\section{Application: multivariate Gaussian process regression}\label{section:application}

As a useful application, multi-output prediction using multivariate Gaussian process is a good example. Multivariate Gaussian process provides a solid and unified framework to make the prediction with multiple responses by taking advantage of their correlations. 
As a regression problem,  multivariate Gaussian process regression (MV-GPR) have closed-form expressions for the marginal likelihoods and predictive distributions and thus parameter estimation can adopt the same optimization approaches as used in the conventional Gaussian process \cite{chen2020multivariate}. 

As a summary of MV-GPR in \cite{chen2020multivariate}, the noise-free multi-output regression model is considered and the noise term is incorporated into the kernel function. Given $n$ pairs of observations $\{(x_i,\bm{y}_i)\}_{i=1}^n, x_i \in \mathbb{R}^p, \bm{y}_i \in \mathbb{R}^{d}$, we assume the following model
\begin{equation*}
  \bm{f} \sim  \mathcal{MGP}_{d}(\bm{0},k',\Lambda),  \quad
   \bm{y}_i  =  \bm{f}(x_i), \mbox{ for} \; i = 1,\cdots,n,
\end{equation*}
where $\Lambda$ is an undetermined covariance (correlation) matrix (the relationship between different outputs), $k' = k(x_i,x_j) + \delta_{ij}\sigma_n^2, $
and $\delta_{ij}$ is Kronecker delta.
According to multivariate Gaussian process, it yields that the collection of functions $[\bm{f}(x_1),\ldots,\bm{f}(x_n)]$ follows a matrix-variate Gaussian distribution
$$
[\bm{f}(x_1)^{\T},\ldots,\bm{f}(x_n)^{\T}]^{\T} \sim \mathcal{MN}(\bm{0},K',\Lambda),
$$
where $K'$ is the $n \times n$ covariance matrix of which the $(i,j)$-th element $[K']_{ij} = k'(x_i,x_j)$.
Therefore, the predictive targets 
$$\bm{f}_* = [f_{*1},\ldots,f_{*m}]^\T$$
at the test locations 
$$X_* = [x_{n+1},\ldots,x_{n+m}]^\T$$ 
is given by
 \begin{equation*}
   p(\bm{f}_*|X,Y,X_*) = \mathcal{MN}(\hat{M},\hat{\Sigma},\hat{\Lambda}),
 \end{equation*}
 where 
 $$\hat{M} = K'(X_*,X)^{\T}K'(X,X)^{-1}Y,$$ 
 $$ \hat{\Sigma}  = K'(X_*,X_*)  - K'(X_*,X)^{\T}K'(X,X)^{-1}K'(X_*,X),$$
 and 
 $$\hat{\Lambda} =  \Lambda.$$
Here $K'(X,X)$ is an $n \times n$ matrix of which the $(i,j)$-th element
$[K'(X,X)]_{ij} = k'(x_{i},x_j)$, $K'(X_*,X)$ is an $m \times n$ matrix of which the $(i,j)$-th element
$[K'(X_*,X)]_{ij} = k'(x_{n+i},x_j)$, and $K'(X_*,X_*)$ is an $m \times m$ matrix with
 the $(i,j)$-th element $[K'(X_*,X_*)]_{ij} = k'(x_{n+i},x_{n+j})$. 
 In addition, the expectation and the covariance are obtained,
\begin{eqnarray*}
  \mathbb{E}[\bm{f}_*] &=& \hat{M}=K'(X_*,X)^{\T}K'(X,X)^{-1}Y, \label{pred_mean}\\
  \mathrm{cov}(\mathrm{vec}(\bm{f}^{\T}_*)) &=& \hat{\Sigma}\otimes \hat{\Lambda}  = [K'(X_*,X_*) - K'(X_*,X)^{ \T}K'(X,X)^{-1}K'(X_*,X)] \otimes \Lambda. \label{pred_var}
\end{eqnarray*}

From the view of data science, the hyperparameters involved in the covariance function (kernel) $k'( \cdot , \cdot)$ and the row covariance matrix of MV-GPR need to be estimated from the training data using many approaches \cite{williams1998bayesian}, such as maximum likelihood estimation, maximum a posteriori and Markov chain Monte Carlo \cite{cite:WCKI}.

\section{Conclusion}\label{section:conclusion}
In this paper, we give a proper definition of the multivariate Gaussian process (MV-GP) and some related properties such as strict stationarity and independence of this process. 
We also provide the examples of multivariate Gaussian white noise and multivariate Brownian motion including It\^o lemma and present an useful application of multivariate Gaussian process regression in statistical learning with our definition. 

\section*{Acknowledgements}
The authors would like to thank Dr Youssef El-Khatib for his comments and Dr. Gregory Markowsky for his kind proofreading and very helpful comments.



\bibliographystyle{plainnat}
\bibliography{reference.bib}

\end{document}